\title{Twisted Fourier--Mukai partners \linebreak of Enriques surfaces}
\author[N.~Addington]{Nicolas Addington}
\address{Nicolas Addington \\
Department of Mathematics \\
University of Oregon \\
Eugene, Oregon 97403 \\
United States}
\email{adding@uoregon.edu}
\author[A.~Wray]{Andrew Wray}
\address{Andrew Wray \\
Department of Mathematics \\
University of Oregon \\
Eugene, Oregon 97403 \\
United States}
\email{awray3@uoregon.edu}
\renewcommand \O {\mathcal O}
\newcommand \Q {\mathbb Q}
\newcommand \Z {\mathbb Z}
\renewcommand \phi \varphi
\DeclareMathOperator \Br {Br}
\DeclareMathOperator \Sq {Sq}
\DeclareMathOperator \Hom {Hom}
\DeclareMathOperator \Pic {Pic}
\newtheorem* {thm*} {Theorem}
\newtheorem {prop} {Proposition} [section]
\theoremstyle{definition}
\newtheorem {rmk} [prop] {Remark}
\begin{document}

\begin{abstract}
Bridgeland and Maciocia showed that a complex Enriques surface $X$ has no Fourier--Mukai partners apart from itself: that is, if $D^b(X) \cong D^b(Y)$ then $X \cong Y$.  We extend this to twisted Fourier--Mukai partners: if $\alpha$ is the non-trivial element of $\Br(X) = \Z/2$ and $D^b(X,\alpha) \cong D^b(Y,\beta)$, then $X \cong Y$ and $\beta$ is non-trivial.  Our main tools are twisted topological K-theory and twisted Mukai lattices.
\end{abstract}

\maketitle

\section*{Introduction}

Two smooth projective varieties $X$ and $Y$ are called \emph{Fourier--Mukai partners} if they have equivalent derived categories of coherent sheaves $D^b(X) \cong D^b(Y)$, and \emph{twisted} Fourier--Mukai partners if they have equivalent derived categories of twisted sheaves $D^b(X,\alpha) \cong D^b(Y,\beta)$ for some Brauer classes $\alpha \in \operatorname{Br}(X)$ and $\beta \in \operatorname{Br}(Y)$.  In the early 2000s, Bridgeland, Maciocia, and Kawamata showed that among complex surfaces, only K3, abelian, and elliptic surfaces have non-trivial Fourier--Mukai partners; see \cite[Ch.~12]{huybrechts_fm} for a textbook account.  Recently several authors have been interested in extending this result to positive characteristic and to twisted Fourier--Mukai partners.  Here we carry out one step in this program:

\begin{thm*}
Let $X$ be a complex Enriques surface, let $\alpha \in \Br(X) = \Z/2$, and let $Y$ be another smooth complex projective variety and $\beta \in \Br(Y)$.  If $D^b(X,\alpha) \cong D^b(Y,\beta)$, then $X \cong Y$, and via that isomorphism $\alpha = \beta$.
\end{thm*}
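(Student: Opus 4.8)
The plan is to bootstrap from the untwisted theorem of Bridgeland--Maciocia together with the (twisted) derived Torelli theorem for K3 surfaces, using twisted topological $K$-theory both to constrain $Y$ and to pin down the Brauer classes. First I would show that $Y$ is again an Enriques surface. The Serre functor of $D^b(Y,\beta)$ is $-\otimes\omega_Y[\dim Y]$, exactly as in the untwisted case, since tensoring with a line bundle does not change the twist; under the given equivalence it is intertwined with $-\otimes\omega_X[2]$, the Serre functor of $D^b(X,\alpha)$. Squaring these and using $\omega_X^{\otimes 2}\cong\O_X$ while $\omega_X\not\cong\O_X$ forces $\dim Y=2$ and $\omega_Y$ to have order exactly $2$; by the Enriques--Kodaira classification $Y$ is then minimal of Kodaira dimension $0$, hence Enriques or bielliptic. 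Since a twisted Fourier--Mukai equivalence preserves Hochschild homology (an Azumaya algebra representing $\beta$ is \'etale-locally a matrix algebra, so $HH_*(D^b(Y,\beta))\cong HH_*(Y)$, and likewise for $X$), $X$ and $Y$ have the same Hodge numbers; as Enriques surfaces have $q=0$ and bielliptic surfaces have $q=1$, $Y$ is Enriques.

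Next I would match the two Brauer classes. A twisted Fourier--Mukai equivalence induces an isomorphism $K^1_{\mathrm{top}}(X,\alpha)\cong K^1_{\mathrm{top}}(Y,\beta)$. For an Enriques surface $H^1(X,\Z)=0$ and $H^3(X,\Z)=\Z/2$, and in the twisted Atiyah--Hirzebruch spectral sequence the only possibly nonzero differential is $d_3$, which is cup product with $\alpha$ plus a universal integral operation that vanishes on $H^0$. Thus $d_3\colon H^0(X,\Z)\to H^3(X,\Z)$ is zero when $\alpha=0$ and surjective when $\alpha\ne0$, so $K^1_{\mathrm{top}}(X,\alpha)$ is $\Z/2$ in the first case and $0$ in the second; hence $\alpha=0$ if and only if $\beta=0$. (Equivalently one compares the discriminant of the Mukai pairing on $K^0_{\mathrm{top}}$, which also changes when the class becomes non-trivial.) Granting $X\cong Y$, this forces $\alpha=\beta$ under that isomorphism, since each is the unique non-trivial class in its $\Br=\Z/2$.

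It remains to prove $X\cong Y$. When $\alpha=\beta=0$ this is the theorem of Bridgeland--Maciocia. Assume therefore $\alpha,\beta\ne0$. Let $\pi_X\colon\tilde X\to X$ and $\pi_Y\colon\tilde Y\to Y$ be the canonical K3 double covers, with fixed-point-free involutions $\iota_X,\iota_Y$, and set $\tilde\alpha=\pi_X^*\alpha$, $\tilde\beta=\pi_Y^*\beta$. Since the covers are free quotients we have $D^b(X,\alpha)\cong D^b_{\Z/2}(\tilde X,\tilde\alpha)$, the $\Z/2$-equivariant structure being unique up to the character $\Hom(\Z/2,\C^*)=\Z/2$ (which amounts to an $\omega_X$-twist downstairs), and likewise for $Y$. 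Adapting the kernel-lifting argument of Bridgeland--Maciocia to twisted sheaves, I would lift the given equivalence to a twisted Fourier--Mukai equivalence $D^b(\tilde X,\tilde\alpha)\cong D^b(\tilde Y,\tilde\beta)$ that intertwines $\iota_X^*$ and $\iota_Y^*$. By the twisted derived Torelli theorem for K3 surfaces this induces a Hodge isometry of twisted Mukai lattices commuting with $\iota_X^*$ and $\iota_Y^*$; restricting to the anti-invariant parts, which carry the holomorphic $2$-forms together with the relevant $B$-field data, and invoking the Torelli theorem for Enriques surfaces, one obtains $X\cong Y$.

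The hard part is this last step: producing the lift to the K3 covers and showing it is genuinely equivariant, while tracking how the pulled-back Brauer classes $\tilde\alpha,\tilde\beta$ (which by Beauville's computation need not be trivial on the covers) interact with the $\Z/2$-equivariant structures, and then carrying out the equivariant twisted Torelli bookkeeping with the $B$-field-shifted Hodge structures on the anti-invariant lattices. Twisted topological $K$-theory of the covers, with its Mukai pairing and filtration, is the tool I would use to organize this, in place of the more delicate analytic twisted Mukai lattices.
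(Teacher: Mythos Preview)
Your first two steps---showing $Y$ is Enriques via the Serre functor and Hochschild homology, and distinguishing $\alpha$ trivial from non-trivial via $K^1_{\mathrm{top}}$ and the twisted Atiyah--Hirzebruch spectral sequence---match the paper's argument essentially verbatim.

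The gap is in the ``both non-trivial'' case, precisely at the sentence ``restricting to the anti-invariant parts\ldots and invoking the Torelli theorem for Enriques surfaces.'' The lifted kernel does induce an integral isometry $\phi\colon H^*(\tilde X,\Z)\to H^*(\tilde Y,\Z)$, but it is a Hodge isometry for the $B$-\emph{twisted} Hodge structures. With Beauville's choice of B-field one has $\tau^*B=-B$, so $\tau^*$ does not preserve the twisted Hodge structure; what $\phi$ actually commutes with is $T:=e^{B}\tau^*e^{-B}=e^{2B}\tau^*$, not $\tau^*$ itself. To apply Enriques Torelli one needs a Hodge isometry of $H^2_-(\tilde X,\Z)$ for the \emph{untwisted} Hodge structure, namely the restriction of $e^{-B}\circ\phi\circ e^{B}$ to the $\tau^*$-anti-invariant part. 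Since $B$ is only half-integral, this conjugated map is \emph{a priori} defined only over $\Q$. The heart of the paper is a lattice calculation showing that for any $T$-equivariant integral isometry $\phi$, the degree-$2$ component of $\phi(0,0,1)$ is divisible by $2$; this is exactly what forces $e^{-B}\phi\,e^{B}$ to preserve $H^2_-(\tilde X,\Z)$. Your outline does not identify this integrality obstruction, and your proposed alternative of organizing the endgame via twisted topological K-theory on the covers does not avoid it: however the isometry is produced, passing from the twisted to the untwisted Hodge structure on $H^2_-$ requires conjugation by $e^{\pm B}$, and integrality there must still be established by hand.
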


\noindent If $\alpha$ and $\beta$ are trivial then this was proved by Bridgeland and Maciocia in \cite[Prop.~6.1]{bm2}.  Some special cases of the twisted result were obtained by Martinez Navas in \cite[Ch.~3]{mn}.  

Dimension, order of the canonical bundle, and Hochschild homology are invariant under twisted derived equivalence, just as they are under untwisted equivalence: the proofs of \cite[Prop.~4.1 and Rem.~6.3]{huybrechts_fm} go through unchanged, relying on existence and especially uniqueness of kernels for twisted equivalences due to Canonaco and Stellari \cite[Thm.~1.1]{cs1}.  Thus $Y$ is an Enriques surface.  In \S\ref{ktop} we use twisted topological K-theory to show that we cannot have $\alpha$ trivial and $\beta$ non-trivial.  In \S\ref{lattice} we show that if $\alpha$ and $\beta$ are both non-trivial then $X \cong Y$; our proof follows the outline of Bridgeland and Maciocia's, but is more delicate.

\subsection*{Acknowledgements}
This paper grew out of conversations with K.~Honigs and S.~Tirabassi, related to \cite{hlt}.  We thank them for a fruitful exchange.  We also thank A.~Beauville and D.~Dugger for their advice, and the referee for helpful suggestions.  Our first attempts at Proposition \ref{the_calc} made heavy use of Macaulay2 \cite{M2}; we thank B.~Young for computer time.

\section{One twisted, one untwisted} \label{ktop}
Given a smooth complex projective variety $X$ and a class $\alpha \in \Br(X)$ with image $\bar\alpha \in H^3(X,\Z)$, we let $K^i_\text{top}(X,\bar\alpha)$ denote twisted topological K-theory; for the definition and first properties we refer to Atiyah and Segal \cite{as1,as2}.  It is a 2-periodic sequence of finitely generated Abelian groups, and can be computed using an Atiyah-Hirzebruch spectral sequence.

An untwisted derived equivalence induces an isomorphism on topological K-theory, and recent work of Moulinos \cite[Cor.~1.2]{tasos},\footnote{This reference is very $\infty$-categorical; a more down-to-earth reader might want to say that for any kernel $P \in D^b(X \times Y,\alpha^{-1} \boxtimes \beta)$, the class $[P] \in K^0_\text{top}(X \times Y, \bar\alpha^{-1} \boxtimes \bar\beta)$ induces a map $K^i_\text{top}(X,\bar\alpha) \to K^i_\text{top}(Y,\bar\beta)$ in a way that's functorial with respect to composition of kernels.  But this would require a compatibility between pushforward on algebraic and topological twisted K-theory, comparable to \cite{ah2}.  This seems to be missing from the literature, and to prove it here would take us too far afield.} together with uniqueness of dg enhancements \cite[\S6.3]{cs2}, extends this to the twisted case: if $D^b(X,\alpha) \cong D^b(Y,\beta)$ then $K^i_\text{top}(X,\bar\alpha) \cong K^i_\text{top}(Y,\bar\beta)$.

We will show that if $X$ is an Enriques surface then $K^1_\text{top}(X) = \Z/2$, but if $\alpha$ is the non-trivial element of $\Br(X) = \Z/2$ then $K^1_\text{top}(X,\bar\alpha) = 0$, and thus an untwisted Enriques surface cannot be derived equivalent to a twisted one.\medskip

By \cite[Lem.~VIII.15.1]{barth}, an Enriques surface has $\pi_1 = \Z/2$ and Hodge diamond
\[ \begin{smallmatrix}
& & 1 \\
& 0 & & 0 \\
0 & & 10 & & 0 \\
& 0 & & 0 \\
& & \phantom{.}1.
\end{smallmatrix} \]
By the universal coefficient theorem and Poincar\'e duality, it follows that
\[ H^i(X,\Z) = \begin{cases} 
\Z & i = 0 \\
0 & i = 1 \\
\Z^{10} \oplus \Z/2 & i = 2 \\
\Z/2 & i = 3 \\
\Z & i = 4.
\end{cases} \]
Now the claims about $K^1_\text{top}$ above follow from:

\begin{prop}
If $X$ is any compact complex surface, then
\[ K^1_\text{top}(X) \cong H^1(X,\Z) \oplus H^3(X,\Z). \]
If $\alpha \in \Br(X)$ has image $\bar\alpha \in H^3(X,\Z)$, then
\[ K^1_\text{top}(X,\bar\alpha) \cong H^1(X,\Z) \oplus H^3(X,\Z)/\bar\alpha. \]
\end{prop}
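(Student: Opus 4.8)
The plan is to compute both groups via the Atiyah--Hirzebruch spectral sequence (AHSS). For the untwisted case, the relevant spectral sequence has $E_2^{p,q} = H^p(X,\Z)$ for $q$ even and $0$ for $q$ odd, converging to $K^{p+q}_\text{top}(X)$. On a surface ($\dim_\R X = 4$) the only possibly nonzero rows are $q$ even, and the only differential that could hit or leave the columns contributing to $K^1$ is $d_3 \colon E_3^{p,q} \to E_3^{p+3,q-2}$. So the contributions to $K^1_\text{top}(X)$ come from $H^1(X,\Z)$ in total degree $1$ and $H^3(X,\Z)$ in total degree $3-2 = 1$. First I would argue that $d_3$ vanishes on these terms. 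The differential $d_3 \colon H^0(X,\Z) \to H^3(X,\Z)$ is classically given by $\beta \circ \Sq^2$ (an integral lift of $\Sq^2$ composed with a Bockstein), which kills all of $H^0$ for degree reasons, and $d_3 \colon H^1(X,\Z) \to H^4(X,\Z)$ likewise vanishes. Hence $E_\infty$ on the anti-diagonal $p+q=1$ is $H^1(X,\Z)$ and $H^3(X,\Z)$, and the extension problem is trivial because $H^1(X,\Z)$ is free (being $H^1$ of a compact complex surface, hence torsion-free), so the filtration splits; this gives the first isomorphism.

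For the twisted case I would use the twisted AHSS of Atiyah--Segal \cite{as2}, which has the same $E_2$ page but with first nonzero differential $d_3 = \beta \circ \Sq^2 + \bar\alpha \cup (-)$, where the cup product with $\bar\alpha \in H^3(X,\Z)$ is the new ingredient. On a surface the $\Sq^2$-part of $d_3$ still vanishes on the terms feeding $K^1$, for the reasons above, so the only surviving piece is cup product with $\bar\alpha$. The differential $d_3 \colon H^0(X,\Z) \to H^3(X,\Z)$ becomes $1 \mapsto \bar\alpha$, so its image is the subgroup $\bar\alpha \subset H^3(X,\Z)$ generated by $\bar\alpha$; the differential $d_3 \colon H^1(X,\Z) \to H^4(X,\Z)$ is cup product with $\bar\alpha$, which I claim is zero. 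After $d_3$, the entries on the anti-diagonal $p+q=1$ become $H^1(X,\Z)$ (from $p=1$, unaffected) and $H^3(X,\Z)/\bar\alpha$ (from $p=3$). There are no further differentials for dimension reasons, so $E_\infty = E_4$ and again the extension is trivial since $H^1(X,\Z)$ is free, yielding the second isomorphism.

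The main obstacle is justifying the two vanishing claims cleanly, especially $d_3 \colon H^1(X,\Z) \to H^4(X,\Z)$ being zero in the twisted case, i.e.\ that $\bar\alpha \cup x = 0$ for all $x \in H^1(X,\Z)$. For a general compact complex surface $H^1(X,\Z)$ need not vanish, so I cannot dismiss this for trivial reasons; instead I would note that $H^4(X,\Z) = \Z$ is torsion-free while $\bar\alpha \in H^3(X,\Z)$ is torsion (it lies in the image of $\Br(X)$, which is torsion), so $\bar\alpha \cup x$ is torsion in a torsion-free group, hence zero. Similarly, for the untwisted $d_3 = \beta \circ \Sq^2$ on $H^1$, the target $H^4(X,\Z) = \Z$ is torsion-free but the image of a Bockstein is torsion, so it vanishes. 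The same torsion argument handles $d_3 \colon H^0 \to H^3$ in the untwisted case (there $\beta\Sq^2$ lands in $2$-torsion, but more simply $\Sq^2$ is zero on $H^0$). I would also need to make sure no higher differential $d_r$ with $r \geq 5$ interferes; on a $4$-manifold these vanish automatically since they would shift $p$ by $r \geq 5$, landing outside the range $0 \le p \le 4$. Finally, the splitting of the $K$-theory filtration: the subquotients on the $p+q=1$ anti-diagonal are $E_\infty^{1,0} = H^1(X,\Z)$ and $E_\infty^{3,-2} = H^3(X,\Z)/\bar\alpha$, and since $H^1(X,\Z)$ is free the short exact sequence $0 \to H^3(X,\Z)/\bar\alpha \to K^1_\text{top}(X,\bar\alpha) \to H^1(X,\Z) \to 0$ splits, completing the proof.
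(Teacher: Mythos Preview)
Your proof is correct and follows essentially the same route as the paper: compute via the Atiyah--Hirzebruch spectral sequence, identify $d_3$ as $\Sq^3_\Z$ (resp.\ $\Sq^3_\Z \pm \bar\alpha \cup{-}$) via Atiyah--Segal, observe that the Steenrod part vanishes on $H^0$ and $H^1$ and that $\bar\alpha \cup H^1$ dies because $\bar\alpha$ is torsion while $H^4$ is free, and finally split the filtration using freeness of $H^1$. The only cosmetic differences are that the paper invokes ``degree reasons'' (i.e.\ $\Sq^2$ vanishes on classes of degree $<2$) rather than your Bockstein-is-torsion argument for the untwisted $d_3\colon H^1\to H^4$, and writes the twisted differential with a minus sign; neither affects the argument.
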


\begin{proof}
We abbreviate $H^i(X,\Z)$ as $H^i$.  The $E_3$ page of the Atiyah--Hirzebruch spectral sequence is
\[ \xymatrix@R=3pt@C=20pt{
\vdots \ar[rrrdd] & \vdots \ar[rrrdd] & \vdots & \vdots & \vdots \\
0 & 0 & 0 & 0 & 0 \\
H^0 \ar[rrrdd] & H^1 \ar[rrrdd] & H^2 & H^3 & H^4 \\
0 & 0 & 0 & 0 & 0 \\
H^0 \ar[rrrdd] & H^1 \ar[rrrdd] & H^2 & H^3 & H^4 \\
0 & 0 & 0 & 0 & 0 \\
\vdots & \vdots & \vdots & \vdots & \vdots \\
} \]
For untwisted K-theory, the map $d_3$ is given by
\[ \Sq^3_\Z = \beta \circ {\Sq^2} \circ r, \]
where $r$ is reduction mod 2, $\Sq^2$ is the usual Steenrod square, and $\beta$ is the Bockstein homomorphism associated to the short exact sequence of coefficient groups
\[ \xymatrix{
0 \ar[r] & \Z \ar[r]^2 & \Z \ar[r]^-r & \Z/2 \ar[r] & 0.
} \]
This vanishes on $H^0$ and $H^1$ for degree reasons, so the spectral sequence degenerates.  The filtration of $K^1_\text{top}(X)$ splits because $H^1$ is free.

For twisted K-theory, the $E_3$ page has the same terms, but now
\[ d_3(x) = \Sq^3_\Z(x) - \bar\alpha \cup x \]
by \cite[Prop.~4.6]{as2}.  This maps $1 \in H^0$ to $-\bar\alpha \in H^3$, and vanishes on $H^1$ because $\bar\alpha$ is torsion and $H^4$ is free.  Thus the $E_4$ page is
\[ \xymatrix@R=3pt@C=20pt{
\vdots & \vdots & \vdots & \vdots & \vdots \\
0 & 0 & 0 & 0 & 0 \\
k \cdot H^0 & H^1 & H^2 & H^3/\bar\alpha & H^4 \\
0 & 0 & 0 & 0 & 0 \\
k \cdot H^0 & H^1 & H^2 & H^3/\bar\alpha & H^4 \\
0 & 0 & 0 & 0 & 0 \\
\vdots & \vdots & \vdots & \vdots & \vdots \\
} \]
where $k$ is the order of $\bar\alpha$.  At this point the spectral sequence degenerates, and again the filtration of $K^1_\text{top}(X,\alpha)$ splits because $H^1$ is free.
\end{proof}

\section{Both twisted} \label{lattice}

We begin by recalling the outline of Bridgeland and Maciocia's proof that if $X$ and $Y$ are complex Enriques surfaces and $D^b(X) \cong D^b(Y)$, then $X \cong Y$; see \cite[Prop.~6.1]{bm2} for the original or \cite[Prop.~12.20]{huybrechts_fm} for another account.  Take universal covers $p\colon \tilde X \to X$ and $q\colon \tilde Y \to Y$, so $\tilde X$ and $\tilde Y$ are K3 surfaces, and let $\tau$ denote the covering involution of either $\tilde X$ or $\tilde Y$.  An equivalence $D^b(X) \to D^b(Y)$ lifts to an equivalence $D^b(\tilde X) \to D^b(\tilde Y)$ that commutes with $\tau^*$.  The induced Hodge isometry $H^*(\tilde X, \Z) \to H^*(\tilde Y, \Z)$ commutes with $\tau^*$, and hence restricts to a Hodge isometry between the $\tau^*$-anti-invariant parts $H^2_-(\tilde X, \Z) \to H^2_-(\tilde Y, \Z)$.  Using Nikulin's lattice theory, this extends to a Hodge isometry on all of $H^2$, still commuting with $\tau^*$.  Thus $X \cong Y$ by the Torelli theorem for Enriques surfaces.

Now let $\alpha \in \Br(X)$ and $\beta \in \Br(Y)$ be non-trivial.  First we will check that an equivalence $D^b(X,\alpha) \to D^b(Y,\beta)$ lifts to an equivalence $D^b(\tilde X, p^*\alpha) \to D^b(\tilde Y, q^* \beta)$ that commutes with $\tau^*$.  Next we will make a careful choice of B-fields in $H^2(\tilde X, \Q)$ and $H^2(\tilde Y, \Q)$ that lift $p^* \alpha$ and $q^* \beta$ and satisfy $\tau^* B = -B$.  Then we have an induced isometry $\phi\colon H^*(\tilde X,\Z) \to H^*(\tilde Y,\Z)$, such that $e^{-B} \circ \phi \circ e^B$ preserves $H^{2,0}$ and commutes with $\tau^*$.  This yields a Hodge isometry $H^2_-(\tilde X, \Q) \to H^2_-(\tilde Y, \Q)$, and the delicate step is to show that it takes $H^2_-(\tilde X, \Z)$ into $H^2_-(\tilde Y, \Z)$.  Then we can conclude as in the untwisted case.

\subsection{Lifting the kernel}

\begin{prop} \label{equivariant}
With the notation introduced above, if $D^b(X,\alpha) \cong D^b(Y,\beta)$ then there is a kernel $\tilde P \in D^b(\tilde X \times \tilde Y, p^* \alpha^{-1} \boxtimes q^* \beta)$ that induces an equivalence $D^b(\tilde X, p^* \alpha) \to D^b(\tilde Y, q^*\beta)$ and satisfies $(\tau \times \tau)^* \tilde P \cong \tilde P$.
\end{prop}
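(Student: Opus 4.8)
The plan is to imitate the untwisted lifting argument from \cite[Prop.~12.20]{huybrechts_fm}, adapting each step to the twisted setting. Suppose $P \in D^b(X \times Y, \alpha^{-1} \boxtimes \beta)$ is a kernel inducing the given equivalence $\Phi_P \colon D^b(X,\alpha) \to D^b(Y,\beta)$. Pulling back along $p \times q \colon \tilde X \times \tilde Y \to X \times Y$ gives a candidate kernel $\tilde P_0 := (p \times q)^* P \in D^b(\tilde X \times \tilde Y, p^*\alpha^{-1} \boxtimes q^*\beta)$. The first point to check is that $\Phi_{\tilde P_0}$ is again an equivalence: this is because $p$ and $q$ are \'etale of degree $2$, so pullback of kernels is compatible with the pullback functors $p^*, q^*$, and one can see that $\Phi_{\tilde P_0}$ has both a left and a right adjoint with the expected composition properties; alternatively, $\tilde P_0$ is the kernel of $q^* \circ \Phi_P \circ p_*$ suitably interpreted, and one checks it is fully faithful with full image by a spanning-class argument using that $p_*, p^*$ generate. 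Here I expect one must be slightly careful with the twists, since $p^* \alpha$ and $q^* \beta$ need not be trivial even though their orders may drop; but the formal properties of twisted Fourier--Mukai transforms in \cite{cs1} handle this.

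Next, consider the two kernels $\tilde P_0$ and $(\tau \times \tau)^* \tilde P_0$. Since $p \circ \tau = p$ and $q \circ \tau = q$, we have $(\tau \times \tau)^*(p\times q)^* P = (p \times q)^* P$ \emph{as objects pulled back from $X \times Y$}, but the subtlety is that this identification is only canonical up to the action of the Galois group $\Z/2 \times \Z/2$ of $p \times q$; the honest statement is that $(\tau \times \tau)^* \tilde P_0 \cong \tilde P_0 \otimes \pi^* L$ for some $L$ pulled back from... no: more precisely $\tilde P_0$ carries a natural equivariant structure \emph{only for the diagonal $\Z/2$}, and $(\tau\times\tau)^*\tilde P_0$ differs from $\tilde P_0$ by at most a line bundle coming from the difference of equivariant structures, which lies in $\Pic(\tilde X \times \tilde Y)$ and is $2$-torsion (a pullback of a character of $\Z/2$). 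So we get $(\tau \times \tau)^* \tilde P_0 \cong \tilde P_0 \otimes M$ where $M = \pi_{\tilde X}^* M_X \otimes \pi_{\tilde Y}^* M_Y$ with $M_X \in \Pic(\tilde X)[2]$, $M_Y \in \Pic(\tilde Y)[2]$ pulled back from the relevant $\Z/2$-torsors; in particular $M^{\otimes 2} \cong \O$.

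The final step is to correct $\tilde P_0$ by tensoring with a line bundle to kill $M$. Set $\tilde P := \tilde P_0 \otimes \pi_{\tilde Y}^* N$ for a suitable $N \in \Pic(\tilde Y)$; then $(\tau \times \tau)^* \tilde P \cong \tilde P_0 \otimes M \otimes \pi_{\tilde Y}^* \tau^* N$, and since $\tau^* N \otimes N^{-1}$ can realize any element of $\Pic(\tilde Y)[2]$ in the image of $(\tau^* - 1)$, and $M_Y$ lies in that image (it is literally of the form $\tau^*N_0 \otimes N_0^{-1}$ by construction), we can arrange $M \otimes \pi_{\tilde Y}^*(\tau^*N \otimes N^{-1}) \cong \O$ provided $M_X$ is also absorbed --- here one uses that $M_X$ and $M_Y$ are not independent: the equivariant-structure ambiguity is governed by a single $\Z/2$ (the antidiagonal in $\Z/2 \times \Z/2$ modulo the diagonal), so in fact $M = \pi_{\tilde X}^* M_X \otimes \pi_{\tilde Y}^* M_Y$ with $M_X, M_Y$ determined by a common class, and a single twist by $\pi_{\tilde Y}^* N$ suffices to trivialize it after also observing $\pi_{\tilde X}^* M_X \cong \pi_{\tilde Y}^* M_Y^{-1}$ on $\tilde X \times \tilde Y$ when both come from the same character. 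Tensoring by a line bundle does not change the induced functor up to natural isomorphism composed with $- \otimes N$, which is still an equivalence, and does not change the twist class $p^*\alpha^{-1} \boxtimes q^*\beta$. Thus $\tilde P$ is the desired kernel.

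I expect the \textbf{main obstacle} to be the bookkeeping in the middle step: precisely identifying how $(\tau \times \tau)^* \tilde P_0$ differs from $\tilde P_0$, i.e. pinning down the line bundle $M$ and showing it is a pullback of $2$-torsion from the two factors in a coordinated way, and then showing that a single line-bundle twist on one factor suffices to remove it. The cleanest way to organize this is probably via descent: $\tilde P_0$ together with its two pullbacks under the two generators of the Galois group of $p \times q$ carries a $1$-cocycle for $\Z/2 \times \Z/2$ valued in $\mathrm{Aut}$, and the obstruction to $(\tau\times\tau)$-invariance is the image of this cocycle under the map $\Z/2 \times \Z/2 \to \Z/2$ picking out the antidiagonal; one then checks this obstruction is killed after twisting. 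If a direct argument proves awkward, an alternative is to invoke uniqueness of Fourier--Mukai kernels \cite[Thm.~1.1]{cs1}: both $\tilde P_0$ and $(\tau\times\tau)^*\tilde P_0$ induce equivalences that agree after composing with $p_*$ and $q^*$ (hence induce the same functor on a common subcategory), so they must be related by tensoring with a line bundle on $\tilde Y$ (or $\tilde X$), which one then corrects.
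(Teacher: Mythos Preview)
Your approach has a genuine gap at the very first step: the naive pullback $\tilde P_0 = (p \times q)^* P$ does \emph{not} induce an equivalence. The Fourier--Mukai functor with kernel $(p \times q)^* P$ is $q^* \circ \Phi_P \circ p_*$; already when $X = Y$, $\alpha = \beta$, and $P = \O_\Delta$, this sends $E \mapsto p^* p_* E \cong E \oplus \tau^* E$, which is certainly not an equivalence. No spanning-class argument will rescue this, and your subsequent discussion of line-bundle corrections is beside the point: $(p \times q)^* P$ is $(\tau \times \tau)^*$-invariant on the nose, since $(p \times q) \circ (\tau \times \tau) = p \times q$, so there is nothing to correct. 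The problem is simply that it is the wrong object.

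The paper proceeds differently. One pulls back along only one factor to get $(p \times 1)^* P$ on $\tilde X \times Y$, and then \emph{lifts} (rather than pulls back) along $1 \times q$. The point is that $(p \times 1)^* P \otimes \omega_{\tilde X \times Y} \cong (p \times 1)^* P$, because an equivalence commutes with Serre functors and $p^* \omega_X \cong \O_{\tilde X}$; such objects are, modulo a compatibility, the same as modules over $(1 \times q)_* \O_{\tilde X \times \tilde Y} = \O \oplus \omega$, hence come from $\tilde X \times \tilde Y$. The resulting $\tilde P$ satisfies $(1 \times q)_* \tilde P \cong (p \times 1)^* P$, so it is a direct summand of your $\tilde P_0 \cong (1 \times q)^* (p \times 1)^* P$, not the whole thing. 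The genuinely delicate issue, and the one you should have been worrying about, is the compatibility: the chosen isomorphism $F \otimes \omega \cong F$ must square to the fixed trivialization $\omega^2 \cong \O$. The paper handles this by showing that $(p \times 1)^* P$ is simple, via
\[
\Hom_{\tilde X \times Y}((p\times 1)^*P,\ (p\times 1)^*P) \cong \Hom_{\tilde X \times X}(\O_{\Gamma_p},\ \O_{\Gamma_p}) = H^0(\O_{\Gamma_p}) \cong \C,
\]
so any sign discrepancy can be scaled away before one lifts cohomology sheaves.
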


\begin{rmk}
The expert reader might worry that $(\tau \times \tau)^* \tilde P$ lies \emph{a priori} in $D^b(\tilde X \times \tilde Y, \tau^* p^* \alpha^{-1} \boxtimes \tau^* q^* \beta)$, and that in order to identify this with $D^b(\tilde X \times \tilde Y, p^* \alpha^{-1} \boxtimes q^* \beta)$ we might have to make some non-canonical choice; cf.\ \cite[Rmk.~1.2.9]{andrei}.  But once we fix a cocycle $\{ U_i, \alpha_{ijk} \}$ representing $\alpha$, the cocycle $\{ p^{-1}(U_i), \alpha_{ijk} \circ p \}$ representing $p^* \alpha$ is actually the same as the cocycle $\{ \tau^{-1}(p^{-1}(U_i)), \alpha_{ijk} \circ p \circ \tau \}$ representing $\tau^* p^* \alpha$, not just cohomologous, because $p \circ \tau = p$.  The same is true of $\beta$.  So the identification is canonical.
\end{rmk}

\begin{proof}[Proof of Proposition \ref{equivariant}]
By \cite[Thm.~1.1]{cs1}, the equivalence is induced by a kernel $P \in D^b(X \times Y, \alpha^{-1} \boxtimes \beta)$.  To lift it to a kernel $\tilde P$ as in the statement of the proposition, we can follow Bridgeland and Maciocia \cite[Thm.~4.5]{bm1}, or Huybrechts' book \cite[Prop.~7.18]{huybrechts_fm}, or Lombardi and Popa \cite[Thm.~10]{lp} 
with no changes.  The key point is an equivalence between:
\begin{enumerate}
\item $(p^* \alpha^{-1} \boxtimes q^* \beta)$-twisted sheaves on $\tilde X \times \tilde Y$,
\medskip

\pagebreak

\item $(p^* \alpha^{-1} \boxtimes \beta)$-twisted sheaves on $\tilde X \times Y$ that are modules over
\[ (1 \times q)_* \O_{\tilde X \times \tilde Y} = \O_{\tilde X \times Y} \oplus \omega_{\tilde X \times Y}, \]
and
\medskip

\item $(p^* \alpha^{-1} \boxtimes \beta)$-twisted sheaves $F$ on $\tilde X \times Y$ with $F \otimes \omega_{\tilde X \times Y} \cong F$.
\end{enumerate}
In fact there is a subtlety in identifying (2) and (3), which the references above elide, but which Krug and Sosna treat carefully in \cite[Lem.~3.6(ii)]{ks}. 
To turn a sheaf as in (3) into a $(\O \oplus \omega)$-module as in (2), one needs the chain of isomorphisms $F \otimes \omega^2 \cong F \otimes \omega \cong F$ to agree with the global identification $\omega^2 \cong \O$.  But in our case, the complex $(p \times 1)^* P$ that we wish to lift is simple, so any discrepancy can be scaled away before we start lifting cohomology sheaves.

To see that $(p \times 1)^* P$ is simple, first observe that it is the composition of the kernels $P \in D^b(X \times Y)$ and $\O_{\Gamma_p} \in D^b(\tilde X \times X)$, where $\Gamma_p$ is the graph of $p$, by \cite[Ex.~5.12 and 5.4(ii)]{huybrechts_fm}.   Moreover, because $P$ induces an equivalence, composition with $P$ is an equivalence $D^b(\tilde X \times X) \to D^b(\tilde X \times Y)$, so
\[ \Hom_{\tilde X \times Y}((p \times 1)^* P, (p \times 1)^* P)
= \Hom_{\tilde X \times X}(\O_{\Gamma_p}, \O_{\Gamma_p})
= H^0(\O_{\Gamma_p}). \]
This is 1-dimensional because $\Gamma_p \cong \tilde X$.
\end{proof}

\subsection{Choice of B-field}
To get induced maps on cohomology from our kernel $\tilde P$, we must choose B-field lifts of our Brauer classes, that is, a class $B \in H^2(\tilde X, \Q)$ with $\exp(B^{0,2}) = p^*\alpha$, and similarly with $q^* \beta$.

By \cite[Lem.~VIII.19.1]{barth}, we can choose an isometry
\begin{equation} \label{good_basis}
H^2(\tilde X,\Z) \cong -E_8 \oplus -E_8 \oplus U \oplus U \oplus U
\end{equation}
under which the involution $\tau^*$ acts as
\begin{equation} \label{involution}
(x,y,z_1,z_2,z_3) \mapsto (y,x,z_2,z_1,-z_3).
\end{equation}
Here $-E_8$ is the unique negative definite even unimodular lattice of rank 8, and $U$ is the standard hyperbolic lattice, with basis $e$ and $f$ satisfying $e^2 = f^2 = 0$ and $e.f = 1$.

The following is essentially due to Beauville \cite{beauville}, as we explain in the proof.

\begin{prop} \label{beauville_prop}
Under the isometry \eqref{good_basis}, the class
\[ B := (0,0,0,0,\tfrac12 e + \tfrac12 f) \in H^2(\tilde X, \Q) \]
satisfies $\exp(B^{0,2}) = p^* \alpha \in \Br(\tilde X) \subset H^2(\O_{\tilde X}^*)$.
\end{prop}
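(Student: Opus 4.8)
The plan is to compute $p^*\alpha \in \Br(\tilde X) = H^2(\tilde X, \O_{\tilde X}^*)$ by tracing the non-trivial class $\bar\alpha \in H^3(X,\Z) = \Z/2$ through the exponential sequences on $X$ and on $\tilde X$. Since $H^2(\O_X) = H^3(\O_X) = 0$ for an Enriques surface, the connecting map $\Br(X) \xrightarrow{\ \sim\ } H^3(X,\Z)$ is an isomorphism, so a \v{C}ech cocycle $(\alpha_{ijk})$ for $\alpha$ has a holomorphic logarithm $(a_{ijk})$ whose coboundary is an integral $3$-cocycle $n$ representing $\bar\alpha$. As $2\bar\alpha = 0$ there is an integral $2$-cochain $\ell$ on $X$ with $\delta\ell = 2n$, and as $H^3(\tilde X,\Z)=0$ there is an integral $2$-cochain $m$ on $\tilde X$ with $\delta m = p^*n$. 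A short \v{C}ech computation --- using $H^2(\O_X)=0$ to kill the error term $(a-\tfrac12\ell)\circ p$ --- then identifies $p^*\alpha$ with $\exp\bigl((\tfrac12\xi)^{0,2}\bigr)$, where $\xi := [\,p^*\ell - 2m\,] \in H^2(\tilde X,\Z)$. Because $p\circ\tau = p$, one computes $\tau^*\xi - \xi = 2\nu$ with $\nu := [\,m-\tau^*m\,]$ anti-invariant and $\xi+\nu$ invariant; the latter lies in $H^2_+(\tilde X,\Z) \subseteq \operatorname{NS}(\tilde X)_\Q$, so subtracting $\tfrac12(\xi+\nu)$ leaves the Brauer class unchanged and produces the B-field $-\tfrac12\nu$, which satisfies $\tau^*B = -B$. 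Thus $p^*\alpha = \exp\bigl((\tfrac12\nu)^{0,2}\bigr)$ with $\nu \in H^2_-(\tilde X,\Z)$, as required for the next subsection.

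It remains to identify $\nu$ up to $2H^2_-(\tilde X,\Z)$ and up to the anti-invariant part of $\operatorname{NS}(\tilde X)$. The choice of $m$ adjusts $\nu$ by $(1-\tau^*)H^2(\tilde X,\Z)$, so what is really at stake is only the class of $\nu$ in the \emph{unscaled} summand $U$ of $H^2_-(\tilde X,\Z) \cong -E_8(2)\oplus U(2)\oplus U$, taken mod $2U$. Reducing the construction mod $2$ gives $\nu \bmod 2 = p^*c$, where $c\in H^2(X,\Z/2)$ is a mod-$2$ lift of $\alpha$: using $\operatorname{NS}(X) = H^2(X,\Z)$ (valid because $h^{2,0}(X)=0$) and the Kummer sequence, $c$ is pinned down, modulo the reduction of $H^2(X,\Z)$, by the condition that its integral Bockstein in $H^3(X,\Z)$ be $\bar\alpha$. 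The Cartan--Leray spectral sequence of the universal cover, together with $H^1(\tilde X,\Z/2) = H^3(\tilde X,\Z) = 0$, shows that $\ker\bigl(p^*\colon H^2(X,\Z/2)\to H^2(\tilde X,\Z/2)\bigr)$ is spanned by $\epsilon^2 = \Sq^1\epsilon$ --- the reduction of the canonical class $K_X$, non-zero since $K_X\notin 2H^2(X,\Z)$, with $\epsilon\in H^1(X,\Z/2)$ the covering class --- which reduces from $H^2(X,\Z)$; as $c$ has non-zero integral Bockstein and so does not reduce from $H^2(X,\Z)$, it is not in $\ker p^*$, whence $p^*c \neq 0$, and it lands in $H^2(\tilde X,\Z/2)^{\tau^*}$, the reduction of $H^2_-(\tilde X,\Z)$. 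The problem thus reduces to computing the image of $p^*c$ in $U/2U$.

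Showing that this image is $\overline{e+f}$ --- equivalently, that $p^*\alpha$ is non-trivial in $\Br(\tilde X)$, i.e.\ no integral anti-invariant $(1,1)$-class on $\tilde X$ has odd component in the last $U$ --- is the genuinely delicate point, and I expect it to be the main obstacle. It is the content of Beauville's computation \cite{beauville}, which combines the determination of $\Br(X)$ with the lattice theory (Nikulin, Namikawa, Horikawa) describing which periods in $H^2_-(\tilde X,\C)$ yield a \emph{free} involution $\tau$; I would simply invoke it. Granting that identification, matching the class with the summand $U$ in \eqref{good_basis} is automatic: that summand is singled out by the isometry type of $-E_8(2)\oplus U(2)\oplus U$, and $p^*\alpha$ is the unique non-trivial element of $\operatorname{im}\bigl(\Br(X)\to\Br(\tilde X)\bigr)$. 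Alternatively, since all complex Enriques surfaces form one deformation class and the class in question is locally constant, it would suffice to verify the identification on a single explicit example, with the marking \eqref{good_basis} chosen $\tau$-equivariantly along the family.
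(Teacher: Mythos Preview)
Your construction of an anti-invariant B-field $\tfrac12\nu$ with $\exp\bigl((\tfrac12\nu)^{0,2}\bigr)=p^*\alpha$ and $\nu\in H^2_-(\tilde X,\Z)$ is sound, and the reduction to identifying the class of $\nu$ in $U/2U$ is correct. But the last paragraph contains a real gap and a misdiagnosis. You assert that ``the image is $\overline{e+f}$'' is \emph{equivalent} to ``$p^*\alpha$ is non-trivial in $\Br(\tilde X)$''. It is not: as the paper notes immediately after the statement, $p^*\alpha$ may well be trivial (this happens along a countable union of divisors in moduli), yet the proposition is still asserted to hold. What must be shown is that $\nu\equiv e+f$ rather than $e$ or $f$ in $U/2U$, and this is a question about the lattice, not about whether the resulting Brauer class vanishes for a particular period. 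Your appeal to Beauville's description of which periods yield free involutions (Nikulin--Namikawa--Horikawa) is aimed at the wrong target; that machinery addresses the locus where $p^*\alpha=0$, not the identification of $\nu$.

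The paper's proof is much shorter because it runs in the opposite direction and exploits a parity trick you are missing. Starting from the explicit $2B=e+f$, one invokes Beauville's Prop.~5.3 to find $x\in H^2(X,\Z/2)$ with $p^*x=r(2B)$, and then observes that the image of $x$ in $\Br(X)=\Z/2$ is nonzero iff $x$ is not the mod-$2$ reduction of an integral class. Since the pairing on $H^2(X,\Z)$ is \emph{even}, any such reduction has square $0$; but Beauville's Lem.~5.4 gives $x^2=1$ from $(2B)^2=2$. That single computation replaces everything you defer. In your language, the point that picks out $\overline{e+f}$ over $\overline{e}$ or $\overline{f}$ is precisely that $(e+f)^2=2$ while $e^2=f^2=0$: the square of $\nu$ modulo $4$, transported to $c^2\in H^4(X,\Z/2)$, is forced to be $1$ by the nontriviality of the Bockstein, and this rules out the isotropic classes. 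If you want to salvage your direction of argument, this is the computation to insert; otherwise, reversing the logic as the paper does is cleaner.
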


\begin{rmk}
Note that $p^* \alpha$ may be trivial: it may be that $B$ has the same $(0,2)$ part as some integral class in $H^2(\tilde X,\Z)$.  Indeed, the point of Beauville's beautiful paper is that the set of Enriques surfaces for which this happens form a countable union of divisors in the moduli space.
\end{rmk}

\pagebreak

\begin{proof}[Proof of Proposition \ref{beauville_prop}]
Consider the diagram of sheaves
\[ \xymatrix@C=35pt{
0 \ar[r] & \Z \ar[d]^r \ar[r]^{2 \pi i} & \O \ar[d]^{\exp(\tfrac12 -)} \ar[r]^\exp & \O^* \ar@{=}[d] \ar[r] & 0 \\
0 \ar[r] & \Z/2 \ar[r] & \O^* \ar[r]_{z \mapsto z^2} & \O^* \ar[r] & 0 \\
} \]
on either $X$ or $\tilde X$.  On the Enriques surface $X$, we have $H^{0,1} = H^{0,2} = 0$, so $\Br(X) = H^2(\O_X^*) = H^3(X,\Z) = \Z/2$, and taking cohomology we get
\[ \xymatrix{
0 \ar[d] \ar[r] & \Pic(X) \ar@{=}[d] \ar[r]^{c_1} & H^2(X,\Z) \ar[d]^r \ar[r] & 0 \ar[d] \\
\Pic(X) \ar[r]^2 & \Pic(X) \ar[r] & H^2(X,\Z/2) \ar[r] & \Br(X) \ar[r]^0 & \Br(X).
} \]
On the K3 surface $\tilde X$, we get
\[ \xymatrix{
0 \ar[d] \ar[r] & \Pic(\tilde X) \ar@{=}[d] \ar[r]^{c_1} & H^2(\tilde X,\Z) \ar[d]^r \ar[r] & H^{0,2}(\tilde X) \ar[d]^{\exp(\tfrac12-)} \\
\Pic(\tilde X) \ar[r]^2 & \Pic(\tilde X) \ar[r] & H^2(\tilde X,\Z/2) \ar[r] & H^2(\O_{\tilde X}^*)
} \]
Moreover the pullback $p^*$ maps the first diagram to the second.

In the second diagram, consider $2B \in H^2(\tilde X, \Z)$.  The reduction $r(2B)$ is called $\varepsilon$ in the notation of \cite[\S5]{beauville}, and by [ibid., Prop.~5.3] we can choose $x \in H^2(X,\Z/2)$ with $p^* x = r(2B)$.  By [ibid., Lem.~5.4], we have $x^2 = 1$, so $x$ is not the reduction of an integral class $y \in H^2(X,\Z)$: if it were, we would have $x^2 = r(y^2) = 0$, because the intersection pairing on $H^2(X,\Z)$ is even \cite[Lem.~VIII.15.1(iii)]{barth}.\footnote{The squares here are just taken in the cohomology rings of $X$ or $\tilde X$, with $\Z$ or $\Z/2$ coefficients, as appropriate; we do not use Beauville's quadratic form $q$.}  So $x$ maps to a non-zero class in $\Br(X)$, which must be $\alpha$, and this is enough to prove the proposition.
\end{proof}

\subsection{Induced map on cohomology}

From here on we fix isometries as in \eqref{good_basis} for both $H^2(\tilde X, \Z)$ and $H^2(\tilde Y, \Z)$, and we continue to let $\tau$ denote the involution on both $\tilde X$ and $\tilde Y$, so $\tau^*$ acts on $H^2$ according to \eqref{involution} and acts trivially on $H^0$ and $H^4$.  From Proposition \ref{beauville_prop} we get B-fields on $\tilde X$ and $\tilde Y$, both denoted by $B$, which satisfy $\tau^* B = -B$.

Following Huybrechts and Stellari \cite[\S4]{hs}, the twisted Mukai vector
\[ v^{-B \boxplus B}(\tilde P) \in H^*(\tilde X \times \tilde Y, \Z) \]
induces a map
\[ \phi\colon H^*(\tilde X, \Z) \to H^*(\tilde Y, \Z), \]
which is an isometry with respect to the Mukai pairing, and whose complexification takes $e^B H^{0,2}(\tilde X)$ into $e^B H^{0,2}(\tilde Y)$.

\begin{prop}
$e^{-B} \circ \phi \circ e^B$ commutes with $\tau^*$.
\end{prop}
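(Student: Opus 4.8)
The plan is to deduce the equivariance of $e^{-B}\circ\phi\circ e^B$ from the equivariance of the kernel $\tilde P$ established in Proposition \ref{equivariant}, via functoriality of the twisted Mukai vector. The point is that $e^{-B}\circ\phi\circ e^B$ is, up to Poincar\'e duality conventions, nothing but the cohomological transform $\phi_{v(\tilde P)}$ attached to the \emph{untwisted} Mukai vector $v^{-B\boxplus B}(\tilde P)\in H^*(\tilde X\times\tilde Y,\Z)$ by the recipe of Huybrechts--Stellari \cite[\S4]{hs}: conjugating $\phi$ by $e^{\pm B}$ is exactly what strips off the contribution of the B-fields, so that what remains is governed by a genuine integral class on the product. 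Concretely, if we write $\psi := e^{-B}\circ\phi\circ e^B$, then $\psi$ is the transform with kernel the \emph{rational} class $e^{-B\boxplus B}\cdot v^{-B\boxplus B}(\tilde P) = \operatorname{ch}(\tilde P)\sqrt{\operatorname{td}}$ in the usual untwisted sense (all the fractional, B-field-dependent pieces cancel), and cohomological transforms are functorial: $\psi$ only depends on this kernel class, and a pullback of the kernel along $\tau\times\tau$ induces conjugation of the transform by $\tau^*$ on either side.

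So the key steps are: (i) recall that $\psi = e^{-B}\circ\phi\circ e^B$ equals the correspondence $\Phi_K\colon H^*(\tilde X,\Q)\to H^*(\tilde Y,\Q)$ with $K = e^{-B\boxplus B}\cdot v^{-B\boxplus B}(\tilde P)$, using the defining formulas of \cite{hs}; (ii) observe that $(\tau\times\tau)^*$ acts on $H^*(\tilde X\times\tilde Y,\Q)$ and that for any correspondence one has $\Phi_{(\tau\times\tau)^*K} = \tau^*\circ\Phi_K\circ\tau^*$, since $\tau$ is an involution so $(\tau^{-1})^* = \tau^*$, and pushforward/pullback along $\tau$ commute with the K\"unneth projections in the standard way; (iii) compute $(\tau\times\tau)^*K$. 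For the last step, $(\tau\times\tau)^*\tilde P\cong\tilde P$ by Proposition \ref{equivariant} (with the canonical identification of twists explained in the Remark), and the B-field we chose satisfies $\tau^*B = -B$ by construction — this is exactly why the formula $(x,y,z_1,z_2,z_3)\mapsto(y,x,z_2,z_1,-z_3)$ was arranged so that $B = (0,0,0,0,\tfrac12 e+\tfrac12 f)$ lies in the $(-1)$-eigenspace. Therefore $(\tau\times\tau)^*(B\boxplus(-B)) = -(B\boxplus(-B))$, whence $(\tau\times\tau)^*$ fixes both $v^{-B\boxplus B}(\tilde P)$ and the correcting factor $e^{-B\boxplus B}$, so $(\tau\times\tau)^*K = K$. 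Combining with (ii) gives $\tau^*\circ\psi\circ\tau^* = \Phi_K = \psi$, i.e.\ $\psi$ commutes with $\tau^*$, as claimed.

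The main obstacle is bookkeeping rather than substance: one must be careful that the identity $\Phi_{(\tau\times\tau)^*K} = \tau^*\circ\Phi_K\circ\tau^*$ holds with the precise normalization of \cite{hs} — there is a transpose / inverse lurking in how the correspondence acts, and one uses that $\tau$ is its own inverse so no sign or contragredient issue arises — and that the twisted Mukai vector is compatible with pullback along $\tau\times\tau$ once the twisting cocycles are identified as in the Remark following Proposition \ref{equivariant}. One also needs that $\tau^*$ is an isometry of the Mukai lattice (it is: it is induced by an automorphism of $\tilde X$), so that conjugating by it stays within the realm of Hodge isometries. None of this is hard; it is just a matter of matching conventions. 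The payoff, as indicated in the outline at the start of \S\ref{lattice}, is that $\psi = e^{-B}\circ\phi\circ e^B$ now preserves $H^{2,0}$ \emph{and} commutes with $\tau^*$, so it restricts to a Hodge isometry of the anti-invariant lattices $H^2_-(\tilde X,\Q)\to H^2_-(\tilde Y,\Q)$, setting up the delicate integrality argument to follow.
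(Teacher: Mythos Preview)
Your overall strategy---pass to the correspondence kernel $K$ for $\psi=e^{-B}\phi e^{B}$ and show $(\tau\times\tau)^*K=K$---is sound and is essentially a repackaging of the paper's computation. But step (iii) contains a genuine error. From $\tau^*B=-B$ you correctly get $(\tau\times\tau)^*(-B\boxplus B)=B\boxplus(-B)$, and then you assert that $(\tau\times\tau)^*$ fixes $v^{-B\boxplus B}(\tilde P)$ and $e^{-B\boxplus B}$ separately. It fixes neither. Naturality of the twisted Mukai vector gives
\[
(\tau\times\tau)^*v^{-B\boxplus B}(\tilde P)=v^{(\tau\times\tau)^*(-B\boxplus B)}\bigl((\tau\times\tau)^*\tilde P\bigr)=v^{\,B\boxplus(-B)}(\tilde P),
\]
and $v^{\,B\boxplus(-B)}(\tilde P)\ne v^{-B\boxplus B}(\tilde P)$ in general: by \cite[Prop.~1.2]{hs} they differ by the factor $e^{2B\boxplus(-2B)}$, which is exactly what the paper records. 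Likewise $(\tau\times\tau)^*e^{-B\boxplus B}=e^{\,B\boxplus(-B)}$. What \emph{is} true is that these two changes cancel in the product, so $K$ is invariant; but you have to invoke the dependence of $v^B$ on the B-field (not just on the Brauer class) to see this. That is the missing ingredient, and it is precisely the content of the paper's four-line displayed computation.

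Two smaller points. First, with the usual conventions the kernel of $\psi=e^{-B}\circ\phi\circ e^{B}$ is $e^{\,B\boxplus(-B)}\cdot v^{-B\boxplus B}(\tilde P)$, not $e^{-B\boxplus B}\cdot v^{-B\boxplus B}(\tilde P)$; check the projection-formula bookkeeping. Second, writing $K=\operatorname{ch}(\tilde P)\sqrt{\operatorname{td}}$ is misleading: $\tilde P$ is a genuinely twisted object (the pulled-back Brauer classes $p^*\alpha$, $q^*\beta$ need not vanish), so it has no untwisted Chern character, and the ``fractional pieces'' do not simply cancel to leave an untwisted expression.
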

\begin{proof}
We have
\begin{align*}
(\tau \times \tau)^* v^{-B \boxplus B}(\tilde P)
&= v^{\tau^*(-B) \boxplus \tau^* B}((\tau \times \tau)^* \tilde P) \\
&= v^{B \boxplus (-B)}(\tilde P) \\
&= \operatorname{ch}^{2B \boxplus (-2B)}(\O_{\tilde X \times \tilde Y}) \cdot v^{-B \boxplus B}(\tilde P) \\
&= e^{2B \boxplus (-2B)} \cdot v^{-B \boxplus B}(\tilde P),
\end{align*}
where in the third line we have used \cite[Prop.~1.2(iii)]{hs}, and in the fourth we have used [ibid., Prop.~1.2(ii)].

This implies that
$\tau^* \circ \phi \circ \tau_* = e^{-2B} \circ \phi \circ e^{2B}$,
which can be manipulated to give the desired result.
\end{proof}

\subsection{Integrality}

If we denote the $\tau^*$-invariant and -anti-invariant parts of $H^*$ by $H^*_+$ and $H^*_- = H^2_-$, then we have constructed a Hodge isometry
\[ e^{-B} \circ \phi \circ e^B\colon H^2_-(\tilde X, \Q) \to H^2_-(\tilde Y, \Q). \]
It remains to show that it maps integral classes to integral classes.

To that end, suppose that $x \in H^2(\tilde X,\Z)$ satisfies $\tau^* x = -x$, and write
\[ \phi(e^B x) = (r,c,s) \in H^0(\tilde Y, \Q) \oplus H^2(\tilde Y, \Q) \oplus H^4(\tilde Y, \Q). \]
Then
\[ e^{-B}(r,c,s) = (r, c - rB, s - cB + \tfrac12rB^2) \]
is $\tau^*$-anti-invariant, so $r = 0$, and $s - cB = 0$: that is,
\[ e^{-B} \phi(e^B x) = (0,c,0). \]
So we wish to show that the degree-2 part of $\phi(e^B x)$ is integral.  We have
\[ e^B x = (0,x,y) \in H^0(\tilde X, \Q) \oplus H^2(\tilde X, \Q) \oplus H^4(\tilde X, \Q), \]
where $y = x.B \in \frac12\Z$.  Since $x$ is integral and $y$ is half-integral, we will have proved our main theorem once we prove:

\begin{prop} \label{the_calc}
For any isometry $\phi\colon H^*(\tilde X,\Z) \to H^*(\tilde Y, \Z)$ that commutes with $T := e^B\!\circ\!\tau^*\!\circ\!e^{-B}$, the degree-2 part of $\phi(0,0,1)$ is divisible by~2.
\end{prop}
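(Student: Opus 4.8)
The plan is to exploit that $T$ is an \emph{involution}. Since $\tau^*$ is one, so is $T = e^B \tau^* e^{-B}$; and as $2B = (0,0,0,0,e+f)$ is an integral class and $B^2 = \tfrac12$, a direct calculation gives
\[ T(r,c,s) = (r,\ \tau^* c + 2rB,\ s + r - 2B\cdot c), \]
which visibly preserves $H^*(\tilde X,\Z)$. An isometry $\phi$ commuting with $T$ carries the invariant sublattice $L := \ker(T - \mathrm{id}) \subset H^*(\tilde X,\Z)$ isometrically onto the analogous sublattice of $H^*(\tilde Y,\Z)$, so the whole question reduces to the position of $(0,0,1)$ inside $L$.

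First I would compute $L$ explicitly. Using the basis \eqref{good_basis} and the formula \eqref{involution}, one checks that $L$ is spanned over $\Z$ by $v := (0,0,1)$, by the classes $(0,(\epsilon,\epsilon,0,0,0),0)$ for $\epsilon$ a basis vector of $-E_8$, by $(0,(0,0,e,e,0),0)$ and $(0,(0,0,f,f,0),0)$, and by $\mu := (2,(0,0,0,0,e+f),0)$; these generators give an orthogonal decomposition
\[ L \;\cong\; (-E_8)(2)\ \oplus\ U(2)\ \oplus\ \Lambda, \qquad \Lambda := \begin{pmatrix} 0 & -2 \\ -2 & 2 \end{pmatrix}, \]
with $v$ and $\mu$ a basis of $\Lambda$. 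A short dual-lattice computation shows $L^* = \tfrac12 L$, so that the discriminant group $A_L := L^*/L$ is canonically $L/2L$. Unwinding the inclusion $L \hookrightarrow H^*$: if the isometry $\bar\phi$ induced by $\phi$ on $A_L$ fixes the class $\xi := \overline{v/2}$, then $\phi(v) \equiv v \pmod{2L}$, and since $v$ has vanishing degree-$2$ part while every element of $2L$ has degree-$2$ part in $2H^2(\tilde Y,\Z)$, this is exactly what the proposition asserts. So it suffices to prove that $\xi$ is fixed by every isometry of $A_L$.

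The point is that $\xi$ is characterized by the discriminant \emph{quadratic} form $q_L \colon A_L \to \Q/2\Z$. On the $(-E_8)(2)$ and $U(2)$ summands $q_L$ is $\Z/2\Z$-valued, but on the summand $A_\Lambda \cong (\Z/2)^2$ it takes the values $0, 0, \tfrac12, -\tfrac12$, the value $0$ occurring precisely at $0$ and at $\xi$. Hence $G := q_L^{-1}(\Z/2\Z)$ is a subgroup of index $2$; on $G$ the discriminant bilinear form $b_L$ is the orthogonal sum of the (nondegenerate) forms of $(-E_8)(2)$ and $U(2)$ with the zero form on $\{0,\xi\}$, so $\xi$ is the unique non-zero element of the radical of $b_L|_G$. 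As any isometry of $A_L$ preserves $q_L$ and $b_L$, it preserves $G$ and this radical, hence fixes $\xi$.

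I expect this last step to be the real obstacle. The naive invariants are insufficient: $v$ and $v' := v + (0,(0,0,e,e,0),0)$ have the same self-intersection, the same divisibility in $L$, and the same value of $q_L$ on $\overline{v/2}$ and $\overline{v'/2}$, yet $v'$ has non-$2$-divisible degree-$2$ part, so an isometry of $L$ sending $v$ to $v'$ would sink the proposition. What forbids it is the precise shape of the discriminant form of the rank-two lattice $\Lambda$, together with the verification that the resulting description of $\xi$ is genuinely isometry-invariant rather than an artefact of the chosen decomposition --- and producing this clean description, instead of resorting to a machine search over $A_L \cong (\Z/2)^{12}$, is the delicate point, presumably where the Macaulay2 computations mentioned in the acknowledgements came in.
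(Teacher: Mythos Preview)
Your proof is correct, and it reaches the same endpoint as the paper---that $\phi(0,0,1)$ is determined modulo $2L$ by an isometry invariant of the $T$-invariant lattice $L$---by a genuinely different route. The paper does not decompose $L$; it verifies directly that $\langle (0,0,1), w\rangle \equiv w^2 \pmod 4$ for every $T$-invariant $w$ (this is the characteristic-vector condition for the odd unimodular lattice $L(\tfrac12)$, as the paper remarks in a footnote), observes that this congruence is preserved by $T$-equivariant isometries, and then extracts the degree-$2$ divisibility by pairing $\phi(0,0,1)$ against $\tfrac12(\ell + T\ell)$ for $\ell\in H^2$ together with the calculation $(\ell+T\ell)^2\equiv 0\pmod 4$. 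Your discriminant-form characterisation of $\xi$ is the same characteristic-vector property in different clothing: your subgroup $G$ is exactly the image of the even sublattice of $L(\tfrac12)$, and the characteristic coset is precisely what pairs evenly with it. The paper's version is shorter and needs nothing beyond two explicit congruences; yours makes the lattice-theoretic structure visible and yields $\phi(v)\equiv v\pmod{2L}$ directly, so the degree-$2$ statement follows without the averaging trick. (The Macaulay2 computations you allude to were, per the acknowledgements, only in the authors' \emph{first attempts}; the published argument, like yours, is computer-free.)
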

\begin{proof}
Observe that $T$ is integral: $T = e^{2B} \circ \tau^*$.
\medskip

By Poincar\'e duality, the statement of the proposition is equivalent to
\[ \langle \phi(0,0,1),\, \ell \rangle \equiv 0 \pmod 2 \]
for all $\ell \in H^2(\tilde X,\Z)$, where $\langle-,-\rangle$ denotes the Mukai pairing on $H^*(\tilde X, \Z)$.  Because $T$ is an isometry and $(0,0,1)$ is $T$-invariant, we have
\[ \langle \phi(0,0,1),\, \ell \rangle = \langle \phi(0,0,1),\, \tfrac12(\ell + T\ell) \rangle, \]
so it is enough to show that
\[ \langle \phi(0,0,1),\, \ell + T\ell \rangle \equiv 0 \pmod 4. \]
\pagebreak

\noindent Now our proof will consist of two calculations: \medskip

\noindent \emph{Claim 1}. For any $\ell \in H^2(\tilde X, \Z)$,
\[ \langle \ell + T \ell, \ell + T \ell \rangle \equiv 0 \pmod 4. \]

\noindent \emph{Claim 2}. For any $T$-invariant class $v \in H^*(\tilde X, \Z)$,
\[ \langle (0,0,1),\, v \rangle \equiv \langle v,v \rangle \pmod 4. \]
Observe that this property is preserved by $T$-equivariant isometries, so $\phi(0,0,1)$ has the same property.\footnote{What's going on is that the pairing on the $T$-invariant sublattice of $H^*(\tilde X, \Z)$ is two times an odd unimodular pairing, and $(0,0,1)$ is what's sometimes called a ``characteristic'' or ``parity'' vector.}
\bigskip

To prove the first claim, write
\begin{align*}
\langle \ell + T \ell, \ell + T \ell \rangle
&= \ell^2 + 2 \langle \ell, T\ell \rangle + \langle T\ell, T\ell \rangle \\
&= 2 \ell^2 + 2 \ell.\tau^* \ell.
\end{align*}
Since $\ell^2$ is even, it is enough to show that $\ell.\tau^*\ell$ is even.  Using the basis \eqref{good_basis}, write
\[ \ell = (x,y,z_1,z_2,z_3). \]
Then
\[ \ell.\tau^*\ell = 2x.y + 2z_1.z_2 - {z_3}^2, \]
which is even because ${z_3}^2$ is even.  Thus the first claim is proved.
\bigskip

To prove the second claim, write
\[ v = (r;\ x,y,z_1,z_2,ae+bf;\ s) \in H^0 \oplus H^2 \oplus H^4, \]
where again we use the basis \eqref{good_basis} for $H^2$.  Then
\begin{align*}
Tv &= e^{2B} \tau^* v \\
&= e^{2B} (r;\ y,x,z_2,z_1,-ae-bf;\ s) \\
&= (r;\ y,x,z_2,z_1,(r-a)e + (r-b)f;\ s-a-b+r).
\end{align*}
From $Tv = v$ we find that $x = y$, $z_1 = z_2$, $r = 2a$, and $a = b$.  Thus
\[ v = (2a;\ x,x,z_1,z_1,ae+af;\ s), \]
so
\[ \langle v,v \rangle = 2x^2 + 2{z_1}^2 + 2a^2 - 4as. \]
Since $x^2$ and ${z_1}^2$ are even,
\[ v^2 \equiv 2a^2 \equiv 2a \pmod 4, \]
and moreover
\[ \langle (0,0,1), v \rangle = -2a, \]
so the second claim is proved.
\end{proof}

\newcommand \httpurl [1] {\href{https://#1}{\nolinkurl{#1}}}
\bibliographystyle{plain}
\bibliography{enriques}

\end{document}